\newcommand{\bracketa}[1]{\big[#1\big]}
\newcommand{\paraa}[1]{\big(#1\big)}
\newcommand{\parab}[1]{\Big(#1\Big)}
\newtheorem{theorem}{Theorem}[section]
\newtheorem{corollary}[theorem]{Corollary}
\newtheorem{lemma}[theorem]{Lemma}
\newtheorem{proposition}[theorem]{Proposition}
\newtheorem{example}[theorem]{Example}
\theoremstyle{definition}
\newtheorem{definition}[theorem]{Definition}
\theoremstyle{remark}
\newtheorem{remark}[theorem]{Remark}
\numberwithin{equation}{section}
\newcommand{\K}{\mathbb K}
\newcommand{\A}{\mathcal{A}}
\newcommand{\triplebcdot}{[\cdot,\cdot,\cdot]}
\newcommand{\doublebcdot}{[\cdot,\cdot]}
\newcommand{\im}{\operatorname{im}}
\newcommand{\tr}{\operatorname{tr}}
\begin{document}

\title[Ternary Hom-Nambu-Lie algebras induced by Hom-Lie algebras]{Ternary Hom-Nambu-Lie algebras \\
  induced by Hom-Lie algebras}

\author{Joakim Arnlind}
\address{Max Planck Institute for Gravitational Physics (AEI), Am M\"uhlenberg 1, D-14476 Golm, Germany.}
\email{arnlind@aei.mpg.de}

\author{Abdenacer Makhlouf}
\address{Universit\'{e} de Haute Alsace,  Laboratoire de Math\'{e}matiques, Informatique et Applications,
4, rue des Fr\`eres Lumi\`ere F-68093 Mulhouse, France}
\email{abdenacer.makhlouf@uha.fr}

\author{Sergei Silvestrov}
\address{Centre for Mathematical Sciences,  Lund University, Box
   118, SE-221 00 Lund, Sweden}
\email{ssilvest@maths.lth.se}

\thanks {
This work was partially supported by the Crafoord Foundation, The
Swedish Foundation for International Cooperation in Research and
Higher Education (STINT), The Swedish Research Council, The Royal Swedish Academy of Sciences, The
Royal Physiographic Society in Lund, and The SIDA Foundation.
We also would like to thank the Royal Institute of Technology for
support and hospitality in connection to visits there while working on this project.}

\subjclass[2000]{17A30,17A40,17A42,17D99}
\keywords{Triple commutator brackets, ternary Nambu-Lie  algebra, ternary
Hom-Nambu-Lie algebra}

\begin{abstract}
  The purpose of this paper is to investigate ternary multiplications
  constructed from a binary multiplication, linear twisting maps and a
  trace function. We provide a construction of ternary Hom-Nambu and
  Hom-Nambu-Lie algebras starting from a binary multiplication of a
  Hom-Lie algebra and a trace function satisfying certain
  compatibility conditions involving twisting maps.  We show that
  mutual position of kernels of twisting maps and the trace play
  important role in this context, and provide examples of
  Hom-Nambu-Lie algebras obtained using this construction.
\end{abstract}

\maketitle

\section{Introduction}\label{sec:intro}

\noindent The $n$-ary algebraic structures and in particular ternary algebraic
structures appear naturally in various domains of
theoretical and mathematical physics as for example Nambu  mechanics
\cite{n:generalizedmech,Takhtajan}. Further recent
motivation to study $n$-ary operations comes from string theory and
M-branes involving naturally an algebra with a ternary operation,
called Bagger-Lambert algebra \cite{BL2007}.    For other physical
applications see \cite{Kerner7,Kerner, Kerner2,Kerner4,Kerner6}. This provides additional motivation for   development of mathematical concepts such as Leibniz $n$-ary algebras \cite{CassasLodayPirashvili,f:nliealgebras}.

In \cite{ams:gennambu}, generalizations of $n$-ary algebras of
Lie type and associative type by twisting the identities using
linear maps have been introduced. These generalizations include $n$-ary Hom-algebra structures
generalizing the $n$-ary algebras of Lie type including $n$-ary
Nambu algebras, $n$-ary Nambu-Lie algebras and $n$-ary Lie algebras, and
$n$-ary algebras of associative type including $n$-ary totally
associative and $n$-ary partially associative algebras.

The general Hom-algebra structures arose first in connection to
quasi-deformation and discretizations of Lie algebras of vector
fields. These quasi-deformations lead to quasi-Lie algebras, a
generalized Lie algebra structure in which the skew-symmetry and
Jacobi conditions are twisted.  The first examples were concerned with
$q$-deformations of the Witt and Virasoro algebras (see for example
\cite{AizawaSaito,ChaiElinPop,ChaiKuLuk,ChaiPopPres,ChaiKuLukPopPresn,CurtrZachos1,
  DaskaloyannisGendefVir,Hu}).  Motivated by these and new examples
arising as applications of the general quasi-deformation construction
of \cite{HLS,LS1,LS2} on the one hand, and the desire to be able to
treat within the same framework such well-known generalizations of Lie
algebras as the color and super Lie algebras on the other hand,
quasi-Lie algebras and subclasses of quasi-Hom-Lie algebras and
Hom-Lie algebras were introduced in \cite{HLS,LS1,LS2,LS3}. In the
subclass of Hom-Lie algebras skew-symmetry is untwisted, whereas the
Jacobi identity is twisted by a single linear map and contains three
terms as for Lie algebras, reducing to ordinary Lie algebras when the
linear twisting map is the identity map.  Hom-associative algebras
replacing associative algebras in the context of Hom-Lie algebras and
also more general classes of Hom-Lie admissible algebras,
$G$-Hom-associative algebras, where introduced in \cite{MS}.  The
first steps in the construction of universal enveloping algebras for
Hom-Lie algebras have been made in \cite{Yau:EnvLieAlg}.  Formal
deformations and elements of (co-)homology for Hom-Lie algebras have
been studied in \cite{HomDeform,Yau:HomolHomLie}, whereas dual
structures such as Hom-coalgebras, Hom-bialgebras and Hom-Hopf
algebras appeared first in \cite{HomHopf,HomAlgHomCoalg} and further
investigated in \cite{IGSC,Yau:HomBial}.

This paper is organized as follows.  In Section \ref{sec:ternaryhom}
we review basic concepts of Hom-Lie, ternary Hom-Nambu and ternary
Hom-Nambu-Lie algebras. We also recall the method of composition with
endomorphism for the construction of Hom-Lie, ternary Hom-Nambu and
ternary Hom-Nambu-Lie algebras from Lie, Nambu and Nambu-Lie
algebras. In Section \ref{sec:inducednambu} we provide a construction
procedure of ternary Hom-Nambu and Hom-Nambu-Lie algebras starting
from a binary bracket of a Hom-Lie algebra and a trace function
satisfying certain compatibility conditions involving the twisting
maps. To this end, we use the ternary bracket introduced in
\cite{amdy:quantnambu}.  In Section \ref{sec:propertiestriple}, we
investigate how restrictive the compatibility conditions are. The
mutual position of kernels of twisting maps and the trace play an
important role in this context.  Finally, in Section
\ref{sec:examples}, we provide examples of Hom-Nambu-Lie algebras
obtained using constructions presented in the paper.

\section{Ternary Nambu-Lie  algebras}\label{sec:ternaryhom}

\noindent Let us first recall some basic facts about Hom-algebras. A
Hom-algebra structure is a multiplication on a vector space, which is
twisted by a linear map. In what follows, all vector spaces will be
defined over a field $\K$ of characteristic $0$, and $V$ will always
denote such a vector space.

The notion of a Hom-Lie algebra was initially motivated by examples of deformed Lie algebras coming from
twisted discretizations of vector fields \cite{HLS,LS1,LS2}.
We will follow notation conventions of \cite{MS}.

\begin{definition} \label{def:HomLie}
A \emph{Hom-Lie algebra} is a triple $(V, [\cdot, \cdot], \alpha)$
where $[\cdot, \cdot]: V\times V \rightarrow V$ is a bilinear map
and $\alpha: V \rightarrow V$
 a linear map
 satisfying
$$\begin{array}{c} [x,y]=-[y,x],  \quad {\text{(skew-symmetry)}} \\{}
\circlearrowleft_{x,y,z}{[\alpha(x),[y,z]]}=0 \quad
{\text{(Hom-Jacobi condition)}}
\end{array}$$
for all $x, y, z$ from $V$, where $\circlearrowleft_{x,y,z}$ denotes
summation over the cyclic permutations of $x,y,z$.
\end{definition}

\noindent Analogously, one can introduce Hom-algebra equivalents of
$n$-ary algebras \cite{ams:gennambu}.
\begin{definition}
  A \emph{ternary Hom-Nambu algebra} is a triple
  $\paraa{V,[\cdot,\cdot,\cdot],\tilde{\alpha}}$, consisting of a
  vector space $V$, a trilinear map $\triplebcdot:V\times V\times
  V\rightarrow V$ and a pair of linear maps
  $\tilde{\alpha}=(\alpha_1,\alpha_2)$ satisfying
  \begin{equation}\label{TernaryNambuIdentity}
    \begin{split}
      [ \alpha_1(x_{1}),&\alpha_{2}(x_{ 2}), [x_{3},x_{4},x_{5}]] =
      [ [x_1,x_2,x_3 ],\alpha_1(x_{4}),\alpha_{2}(x_{5})] \\
      &+ [ \alpha_{1} (x_3),[ x_1,x_2,x_{4}] , \alpha_{2}(x_{5})]+
      [\alpha_1(x_3),\alpha_{2}(x_{4}),[x_1,x_2,x_{5}]].
    \end{split}
  \end{equation}
\end{definition}
\noindent The identity \eqref{TernaryNambuIdentity} is called  ternary
Hom-Nambu identity.

\begin{remark}
Let $(V,[\cdot,\cdot,\cdot],\tilde{\alpha})$ be a ternary Hom-Nambu algebra
 where $\tilde{\alpha}=(\alpha_{1},\alpha_{2})$. Let $x=(x_1,x_{2})\in
V\times V$, $\tilde{\alpha}(x)=(\alpha_1(x_1),\alpha_{2}(x_{2}))\in V\times
V$ and $y\in V$. Let $L_x$ be a linear map on $V$,
 defined   by  $$L_{x}(y)=[x_{1},x_{2},y].$$
Then the Hom-Nambu identity  writes
\begin{align*}
  L_{\tilde{\alpha} (x)}( [x_{3},x_{4},x_{5}])= &[L_{x}(x_{3}),\alpha_1(x_{4}),
  \alpha_{2}(x_{5})]+[\alpha_1(x_{3}), L_{x}(x_{4}),
  \alpha_{2}(x_{5})]\\
  &+[\alpha_1(x_{3}), \alpha_{2}(x_{4}), L_{x}(x_{5})].
\end{align*}
\end{remark}

\begin{remark}
When the maps $(\alpha_{i})_{i=1,2}$ are all identity maps, one
recovers the classical ternary Nambu algebras. The identity obtained
in this special case of classical $n$-ary Nambu algebra is known
also as the fundamental identity or Filippov identity
\cite{f:nliealgebras, n:generalizedmech,Takhtajan}.
\end{remark}


\begin{definition} \label{def:hom-Nambu-Liealg}
  A ternary Hom-Nambu algebra
  $\paraa{V,\triplebcdot,(\alpha_1,\alpha_2)}$ is called
  a \emph{ternary Hom-Nambu-Lie algebra} if the bracket is
  skew-symmetric, that is
  \begin{equation} \label{trilskewsym}
  [x_{\sigma (1)}, x_{\sigma (2)}, x_{\sigma (3)}]=Sgn(\sigma )[x_{1}, x_{2}, x_{3}],\quad
    \forall\, \sigma \in\mathcal{S}_{3}\text{ and }\forall\, x_{1}, x_{2}, x_{3}\in V
  \end{equation}
  where $\mathcal{S}_{3}$ stands for the permutation group on $3$
  elements.
\end{definition}

\noindent The morphisms of ternary Hom-Nambu algebras  are defined in the
natural way. It should be pointed out however that the morphisms
should intertwine not only the ternary products but also the
twisting linear maps.
Let $(V,[\cdot,\cdot,\cdot],\tilde{\alpha})$ and
$(V',[\cdot,\cdot,\cdot]',\tilde{\alpha}')$ be two $n$-ary Hom-Nambu
algebras where $\tilde{\alpha}=(\alpha_{1},\alpha_{2})$ and
$\tilde{\alpha}'=(\alpha'_{1},\alpha'_{2})$. A linear map $\rho:
V\rightarrow V'$ is a ternary Hom-Nambu algebra morphism if it
satisfies
\begin{eqnarray*}\rho ([x_{1}, x_{2}, x_{3}])&=&
  [\rho (x_{1}),\rho (x_{2}),\rho (x_{3})]'\\
  \rho \circ \alpha_i&=&\alpha'_i\circ \rho \quad \text{ for } i=1,2.
\end{eqnarray*}
The following theorem, given in \cite{ams:gennambu} for $n$-ary algebras
of Lie type, provides a way to construct ternary Hom-Nambu algebras
(resp. $n$-ary Hom-Nambu-Lie algebras) starting from ternary Nambu
algebra (resp. $n$-ary Nambu-Lie algebra) and an algebra
endomorphism.
\begin{theorem}[\cite{ams:gennambu}]\label{ThmConstrHomNambuLie}
  Let $(V,[\cdot,\cdot,\cdot])$ be a ternary Nambu algebra (resp.
  ternary Nambu-Lie algebra) and let $\rho : V\rightarrow V$ be a
  ternary Nambu (resp. ternary Nambu-Lie) algebra endomorphism.  If we
  set $\tilde{\rho}=(\rho,\rho)$, then
  $(V,\rho\circ[\cdot,\cdot,\cdot],\tilde{\rho})$ is a ternary
  Hom-Nambu algebra (resp.  ternary Hom-Nambu-Lie algebra).

  Moreover, suppose that $(V',[\cdot,\cdot,\cdot]')$ is another
  ternary Nambu algebra (resp.  ternary Nambu-Lie algebra) and $\rho '
  : V'\rightarrow V'$ is a ternary Nambu (resp.  ternary Nambu-Lie)
  algebra endomorphism. If $f:V\rightarrow V'$ is a ternary Nambu
  algebra morphism (resp. ternary Nambu-Lie algebra morphism) that
  satisfies $f\circ\rho=\rho'\circ f$ then
  \begin{align*}
    f:(V,\rho\circ[\cdot,\cdot,\cdot],\tilde{\rho})\longrightarrow
    (V',\rho'\circ[\cdot,\cdot,\cdot]',\tilde{\rho}')
  \end{align*}
  is a ternary Hom-Nambu algebra morphism (resp.  ternary
  Hom-Nambu-Lie algebra morphism).
\end{theorem}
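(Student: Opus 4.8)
The plan is to verify the ternary Hom-Nambu identity \eqref{TernaryNambuIdentity} for the twisted bracket $[\cdot,\cdot,\cdot]_\rho := \rho\circ[\cdot,\cdot,\cdot]$ with $\tilde\rho=(\rho,\rho)$ by pulling $\rho$ out of every bracket using the fact that $\rho$ is an algebra endomorphism, and then invoking the fundamental (Filippov) identity that holds in the original ternary Nambu algebra. Concretely, I would first compute the left-hand side of \eqref{TernaryNambuIdentity} for the new bracket: $[\rho(x_1),\rho(x_2),[x_3,x_4,x_5]_\rho]_\rho = \rho\big[\rho(x_1),\rho(x_2),\rho[x_3,x_4,x_5]\big] = \rho^2[x_1,x_2,[x_3,x_4,x_5]]$, where the second equality uses that $\rho$ is an endomorphism of $[\cdot,\cdot,\cdot]$, i.e. $\rho[a,b,c]=[\rho(a),\rho(b),\rho(c)]$, applied to the inner bracket, and then again to the outer one. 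The key observation is that every factor $\alpha_i$ appearing in \eqref{TernaryNambuIdentity} is here equal to $\rho$, so all the twisting maps and the bracket-twist combine cleanly into powers of $\rho$.

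Next I would do the same for each of the three terms on the right-hand side. For instance the first term is $\big[[x_1,x_2,x_3]_\rho,\rho(x_4),\rho(x_5)\big]_\rho = \rho\big[\rho[x_1,x_2,x_3],\rho(x_4),\rho(x_5)\big] = \rho^2[[x_1,x_2,x_3],x_4,x_5]$, again by applying the endomorphism property twice; the other two terms are handled identically. So the new ternary Hom-Nambu identity reduces, after factoring out $\rho^2$, to the statement
\begin{equation*}
  [x_1,x_2,[x_3,x_4,x_5]] = [[x_1,x_2,x_3],x_4,x_5] + [x_3,[x_1,x_2,x_4],x_5] + [x_3,x_4,[x_1,x_2,x_5]],
\end{equation*}
which is precisely the Filippov/fundamental identity satisfied by the ternary Nambu algebra $(V,[\cdot,\cdot,\cdot])$ (the $\alpha_i=\mathrm{id}$ case of \eqref{TernaryNambuIdentity}, as noted in the remark following Definition~\ref{def:hom-Nambu-Liealg}'s predecessor). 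Applying $\rho^2$ to both sides finishes the verification. For the Nambu-Lie case, skew-symmetry of $\rho\circ[\cdot,\cdot,\cdot]$ follows immediately from skew-symmetry of $[\cdot,\cdot,\cdot]$ and linearity of $\rho$, so $(V,\rho\circ[\cdot,\cdot,\cdot],\tilde\rho)$ is a ternary Hom-Nambu-Lie algebra.

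For the morphism statement, I would check the two defining conditions of a ternary Hom-Nambu morphism for $f$ viewed between the twisted structures. The bracket condition reads $f\big(\rho[x_1,x_2,x_3]\big) \stackrel{?}{=} \rho'\big[f(x_1),f(x_2),f(x_3)\big]'$; since $f$ is a morphism of the original ternary algebras, $f[x_1,x_2,x_3]=[f(x_1),f(x_2),f(x_3)]'$, and then $f\circ\rho=\rho'\circ f$ converts $f(\rho[\dots])$ into $\rho'(f[\dots])$, giving the desired equality. The compatibility with twisting maps, $f\circ\rho=\rho'\circ f$, is exactly the hypothesis. This part is entirely formal. The only mildly delicate point — and the step I would be most careful about — is bookkeeping the exponents of $\rho$ when the bracket-twist $\rho\circ[\cdot,\cdot,\cdot]$ is nested (one factor of $\rho$ from the outer twist, one from pushing $\rho$ through the inner twisted bracket via the endomorphism property), but there is no genuine obstacle here; it is a direct consequence of Theorem~\ref{ThmConstrHomNambuLie} being quoted, and the computation is routine once the powers of $\rho$ are tracked consistently.
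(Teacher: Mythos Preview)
Your argument is correct and is precisely the standard one: use that $\rho$ is an endomorphism of the original bracket to pull two copies of $\rho$ out of each nested bracket, reducing the Hom-Nambu identity for $\rho\circ[\cdot,\cdot,\cdot]$ with twists $(\rho,\rho)$ to $\rho^2$ applied to the ordinary Filippov identity; the morphism part is the formal check you describe. Note, however, that the paper itself does not give a proof of this theorem---it is quoted from \cite{ams:gennambu}---so there is no in-paper argument to compare against; your write-up matches the expected proof in that reference.
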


\begin{example}
  An algebra $V$ consisting of polynomials or possibly of other
  differentiable functions in $3$ variables $x_{1},x_{2},x_{3},$
  equipped with well-defined bracket multiplication given by the
  functional jacobian $J(f)=(\frac{\partial f_i}{\partial x_j})_{1\leq
    i,j \leq 3}$:
  \begin{equation} \label{JacternaryNambuLie} \lbrack
    f_{1},f_{2},f_{3}\rbrack=\det \left(
      \begin{array}{ccc}
        \frac{\partial f_{1}}{\partial x_{1}} & \frac{\partial
          f_{1}}{\partial x_{2}} &
        \frac{\partial f_{1}}{\partial x_{3}} \\
        \frac{\partial f_{2}}{\partial x_{1}} & \frac{\partial
          f_{2}}{\partial x_{2}} &
        \frac{\partial f_{2}}{\partial x_{3}} \\
        \frac{\partial f_{3}}{\partial x_{1}} & \frac{\partial
          f_{3}}{\partial x_{2}} &
        \frac{\partial f_{3}}{\partial x_{3}}%
      \end{array}%
    \right),
  \end{equation}%
  is a ternary Nambu-Lie algebra. By considering a ternary Nambu-Lie
  algebra endomorphism of such algebra, we construct a ternary
  Hom-Nambu-Lie algebra. Let $\gamma(x_1,x_2,x_3)$ be a polynomial or
  a more general differentiable transformation of three variables,
  mapping elements of $V$ to elements of $V$ under composition $f\mapsto f\circ \gamma $, and such that
  $\det J (\gamma) = 1$. Let $\rho_\gamma: V \mapsto V$ be the composition
  transformation defined by $f\mapsto f\circ \gamma $ for any $f \in
  V$. By the general chain rule for composition of transformations of
  several variables,
  \begin{eqnarray*}
    J(\rho_\gamma (f)) = J(f\circ \gamma)=(J(f)\circ \gamma) J(\gamma)=\rho_\gamma (J(f))J(\gamma), \\
    \det J(\rho_\gamma (f)) = \det (J(f)\circ \gamma) \det J(\gamma)=
    \det \rho_\gamma (J(f))\det J(\gamma).
  \end{eqnarray*}
  Hence, for any transformation $\gamma$ with $\det J(\gamma)=1$, the
  composition transformation $\rho_\gamma$ defines an endomorphism of
  the ternary Nambu-Lie algebra with ternary product
  \eqref{JacternaryNambuLie}. Therefore, by Theorem
  \ref{ThmConstrHomNambuLie}, for any such transformation $\gamma$,
  the triple
$$\left(V,\rho_\gamma\circ \lbrack \cdot,\cdot,\cdot \rbrack,(\rho_\gamma,\rho_\gamma)\right)$$
is a ternary Hom-Nambu-Lie algebra.
\end{example}

\section{Hom-Nambu-Lie algebras induced by Hom-Lie algebras}\label{sec:inducednambu}

\noindent In this Section we provide a construction procedure of
ternary Hom-Nambu and Hom-Nambu-Lie algebras starting from a binary
bracket of a Hom-Lie algebra and a trace function satisfying certain
compatibility conditions involving the twisting maps. To this end, we
use the ternary bracket introduced in \cite{amdy:quantnambu}.
\begin{definition} \label{def:bintracetril}
  Let $(V,[\cdot,\cdot])$ be a binary algebra and let $\tau: V
  \to \K$ be a linear map. The trilinear map
  $[\cdot,\cdot,\cdot]_{\tau}:V\times V\times V\to V$ is defined as
  \begin{align}\label{NambuCommutator1}
    [x,y,z]_{\tau}=\tau(x) [y,z]+ \tau(y) [z,x]+\tau(z) [x,y].
  \end{align}
\end{definition}

\begin{lemma}  \label{lem:skewsymbinskewsymtril}
If the bilinear multiplication $[\cdot,\cdot]$ in Definition \ref{def:bintracetril} is skew-symmetric, then
the trilinear map $[\cdot,\cdot,\cdot]_{\tau}$ is skew-symmetric as well.
\end{lemma}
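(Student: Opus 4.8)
The plan is to verify skew-symmetry directly from the definition \eqref{NambuCommutator1}, using the skew-symmetry of the binary bracket $[\cdot,\cdot]$. Since the symmetric group $\mathcal{S}_3$ is generated by the two transpositions $(1\,2)$ and $(2\,3)$, it suffices to check that swapping the first two arguments and swapping the last two arguments each introduce a sign $-1$; the general case $[x_{\sigma(1)},x_{\sigma(2)},x_{\sigma(3)}]_\tau = \mathrm{Sgn}(\sigma)[x_1,x_2,x_3]_\tau$ then follows by composing transpositions and multiplying signs.

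\begin{proof}
It suffices to check that the trilinear map $[\cdot,\cdot,\cdot]_\tau$ changes sign under the transpositions $(1\,2)$ and $(2\,3)$, since these generate $\mathcal{S}_3$ and since $\mathrm{Sgn}$ is multiplicative. For the transposition $(1\,2)$, using \eqref{NambuCommutator1} and the skew-symmetry $[a,b]=-[b,a]$ of the binary bracket,
\begin{align*}
  [y,x,z]_\tau &= \tau(y)[x,z] + \tau(x)[z,y] + \tau(z)[y,x] \\
  &= -\tau(y)[z,x] - \tau(x)[y,z] - \tau(z)[x,y] \\
  &= -\big(\tau(x)[y,z] + \tau(y)[z,x] + \tau(z)[x,y]\big) = -[x,y,z]_\tau.
\end{align*}
For the transposition $(2\,3)$, similarly,
\begin{align*}
  [x,z,y]_\tau &= \tau(x)[z,y] + \tau(z)[y,x] + \tau(y)[x,z] \\
  &= -\tau(x)[y,z] - \tau(z)[x,y] - \tau(y)[z,x] = -[x,y,z]_\tau.
\end{align*}
Hence for any $\sigma\in\mathcal{S}_3$, writing $\sigma$ as a product of $k$ transpositions gives $[x_{\sigma(1)},x_{\sigma(2)},x_{\sigma(3)}]_\tau = (-1)^k [x_1,x_2,x_3]_\tau = \mathrm{Sgn}(\sigma)[x_1,x_2,x_3]_\tau$, which is the asserted skew-symmetry.
\end{proof}

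There is no real obstacle here: the statement is an immediate consequence of the defining formula together with skew-symmetry of $[\cdot,\cdot]$, and the only mild point worth care is that $\tau$ is scalar-valued, so each $\tau(\cdot)$ factor is simply carried along unchanged while the sign comes entirely from the binary bracket. The reduction to generating transpositions is what keeps the argument short rather than requiring one to list all six permutations.
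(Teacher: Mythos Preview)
Your proof is correct and follows essentially the same approach as the paper: both reduce to the generating transpositions $(1\,2)$ and $(2\,3)$ using that $\mathrm{Sgn}$ is multiplicative, and then verify the sign change directly from \eqref{NambuCommutator1} and the skew-symmetry of $[\cdot,\cdot]$.
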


\begin{proof}
The permutation group $S_3$ is generated by the two transpositions of neighboring indexes $(1,2)$ and $(2,3)$ and  $Sgn$ is multiplicative functional.
Thus the proof of skew-symmetry \eqref{trilskewsym} will be completed if it is done for $\sigma \in \{(1,2), (2,3)\}$.
This is proved using skew-symmetry of the bilinear $[\cdot,\cdot]$ multiplication as follows.
If  $\sigma = (1,2) $, then
\begin{eqnarray*}
[x_{\sigma (1)}, x_{\sigma (2)}, x_{\sigma (3)}] &=& [x_{2}, x_{1}, x_{3}]= \tau(x_2) [x_{1}, x_{3}] + \tau(x_1) [x_{3}, x_{2}] + \tau(x_3) [x_{2}, x_{1}]= \\
&=& -\tau(x_1) [x_{2}, x_{3}] - \tau(x_2) [x_{3}, x_{1}] - \tau(x_3) [x_{1}, x_{2}]= \\
&=& -[x_{1}, x_{2}, x_{3}]=Sgn(\sigma )[x_{1}, x_{2}, x_{3}].
\end{eqnarray*}
If  $\sigma = (2,3) $, then
\begin{eqnarray*}
[x_{\sigma (1)}, x_{\sigma (2)}, x_{\sigma (3)}] &=& [x_{1}, x_{3}, x_{2}]= \tau(x_1) [x_{3}, x_{2}] + \tau(x_3) [x_{2}, x_{1}] + \tau(x_2) [x_{1}, x_{3}]= \\
&=& -\tau(x_1) [x_{2}, x_{3}] - \tau(x_2) [x_{3}, x_{1}] - \tau(x_3) [x_{1}, x_{2}]= \\
&=& -[x_{1}, x_{2}, x_{3}]=Sgn(\sigma )[x_{1}, x_{2}, x_{3}].\qedhere
\end{eqnarray*}
\end{proof}

\noindent If $\tau:V\to\K$ is a linear map such
that $\tau([x,y])=0$ for
all $x,y\in V$, then we call $\tau$ a \emph{trace function on
  $(V,[\cdot,\cdot])$}. It follows immediately that $\tau([x,y,z]_{\tau})=0$ for all
$x,y,z\in V$ if $\tau$ is a trace function.

\begin{theorem}\label{thm:inducedHomLie}
  Let $(V,[\cdot,\cdot],\alpha)$ be a Hom-Lie algebra and
  $\beta:V\to V$ be a linear map. Furthermore, assume that $\tau$ is a
  trace function on $V$ fulfilling
  \begin{align}
    \tau\paraa{\alpha(x)}\tau(y) &=\tau(x)\tau\paraa{\alpha(y)}\label{eq:taualpha}\\
    \tau\paraa{\beta(x)}\tau(y) &=\tau(x)\tau\paraa{\beta(y)}\label{eq:taubeta}\\
    \tau\paraa{\alpha(x)}\beta(y) &=\tau\paraa{\beta(x)}\alpha(y)\label{eq:taualphabeta}
  \end{align}
  for all $x,y\in V$. Then
  $(V,[\cdot,\cdot,\cdot]_{\tau},(\alpha,\beta))$ is a Hom-Nambu-Lie
  algebra, and we say that it is induced by  $(V,[\cdot,\cdot],\alpha)$.
\end{theorem}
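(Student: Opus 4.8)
The plan is to check separately the two defining properties of a ternary Hom-Nambu-Lie algebra. Skew-symmetry \eqref{trilskewsym} is immediate: the bracket of a Hom-Lie algebra is skew-symmetric, so Lemma \ref{lem:skewsymbinskewsymtril} shows that $[\cdot,\cdot,\cdot]_{\tau}$ is skew-symmetric. Also, since $\tau$ is a trace function we have $\tau([x,y,z]_{\tau})=0$ for all $x,y,z\in V$, a fact used repeatedly below. So the real work is to establish the ternary Hom-Nambu identity \eqref{TernaryNambuIdentity} with $(\alpha_1,\alpha_2)=(\alpha,\beta)$.

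I would do this by a direct expansion of both sides of \eqref{TernaryNambuIdentity} using Definition \ref{def:bintracetril}. Put $P=[x_3,x_4,x_5]_{\tau}$ and $A=[x_1,x_2,x_3]_{\tau}$, $B=[x_1,x_2,x_4]_{\tau}$, $C=[x_1,x_2,x_5]_{\tau}$; the trace-function property gives $\tau(P)=\tau(A)=\tau(B)=\tau(C)=0$, so in each of the four triple brackets in \eqref{TernaryNambuIdentity} the summand in which $\tau$ is applied to one of $P,A,B,C$ disappears. Expanding the remaining outer and inner binary brackets, both sides become sums of terms of the shape $\tau(\cdot)\,\tau(\cdot)\,\bigl[\,\cdot\,,[\,\cdot\,,\cdot\,]\,\bigr]$, each carrying one $\tau$-factor in which a twisting map occurs, one $\tau$-factor in which none does, and one twisting map applied to the outer slot of the double bracket.

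The combinatorial core is to match these (roughly two dozen) terms. I would first use \eqref{eq:taualpha} and \eqref{eq:taubeta} to shift $\alpha$ and $\beta$ between the two $\tau$-factors so as to put every scalar coefficient into a fixed normal form, and then use \eqref{eq:taualphabeta}, in the form $\tau(\alpha(x_i))\,\beta(x_j)=\tau(\beta(x_i))\,\alpha(x_j)$ applied inside a bracket linear in the slot holding $x_j$, to rewrite every double bracket whose outer slot carries $\beta$ as one whose outer slot carries $\alpha$. After this normalization, move everything to one side and collect terms according to the unordered pair $\{i,j\}$ of indices sitting in the two $\tau$-factors; the complementary triple of indices then sits in the three slots of the double brackets, and each group is, up to a common scalar, either a pair of terms cancelling by skew-symmetry of $[\cdot,\cdot]$ or a cyclic sum $\circlearrowleft_{u,v,w}[\alpha(u),[v,w]]$ that vanishes by the Hom-Jacobi condition. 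Hence the left and right sides of \eqref{TernaryNambuIdentity} agree.

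The main obstacle is exactly the bookkeeping in this last step: all three compatibility conditions \eqref{eq:taualpha}--\eqref{eq:taualphabeta} must be combined to force the scalar coefficients of a matching triple of double brackets into agreement before the Hom-Jacobi condition can be invoked. As a guide it helps to first dispose of the trivial case $\tau\equiv 0$ (where $[\cdot,\cdot,\cdot]_{\tau}\equiv 0$) and to note that when $\tau\neq 0$ the conditions \eqref{eq:taualpha}--\eqref{eq:taualphabeta} force $\tau\circ\alpha$ and $\tau\circ\beta$ to be scalar multiples of $\tau$; this makes transparent why nothing beyond skew-symmetry and the Hom-Jacobi condition is required.
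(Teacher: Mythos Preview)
Your proposal is correct and takes essentially the same approach as the paper: both expand the Hom-Nambu identity into 24 terms of the shape $\tau(\cdot)\tau(\cdot)\bigl[\cdot,[\cdot,\cdot]\bigr]$ and use the compatibility conditions \eqref{eq:taualpha}--\eqref{eq:taualphabeta} together with the Hom-Jacobi identity to make them cancel. The only difference is organizational---the paper groups the 24 terms directly into three pairs (each vanishing by one of \eqref{eq:taualpha}--\eqref{eq:taualphabeta}) and six triples (each reduced via \eqref{eq:taualphabeta} and \eqref{eq:taubeta} to a Hom-Jacobi sum), whereas you first normalize every term to carry $\alpha$ in the outer bracket slot via \eqref{eq:taualphabeta} and then group by the $\tau$-index pair---but these are equivalent bookkeeping schemes for the same computation.
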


\begin{proof}
  Since $[\cdot,\cdot,\cdot]_\tau$ is skew-symmetric and trilinear by construction,
  one only has to prove that the Hom-Nambu identity is
  fulfilled. Expanding the Hom-Nambu identity
  \begin{align*}
    [\alpha(x),\beta(y),[z,u,v]] =
    [[&x,y,z],\alpha(u),\beta(v)]+
    [\alpha(z),[x,y,u],\beta(v)]\\
    &+[\alpha(z),\beta(u),[x,y,v]]
  \end{align*}
  gives 24 different terms. Six of these can be grouped into three pairs as follows
  \begin{align*}
    &\bracketa{\beta(v),[x,y]}\parab{\tau\paraa{\alpha(u)}\tau(z)-\tau\paraa{\alpha(z)}\tau(u)}\\
    &\bracketa{\tau\paraa{\alpha(z)}\tau(v)\beta(u)-\tau\paraa{\beta(v)}\tau(z)\alpha(u),[x,y]}\\
    &\bracketa{\alpha(z),[x,y]}\parab{\tau\paraa{\beta(v)}\tau(u)-\tau\paraa{\beta(u)}\tau(v)},
  \end{align*}
  which all vanish separately by \eqref{eq:taualpha},
 \eqref{eq:taubeta} and \eqref{eq:taualphabeta}. The remaining 18
  terms can be grouped in to six triples of the type
  \begin{align*}
    \tau\paraa{\alpha(x)}\tau(u)\bracketa{\beta(y),[z,v]}+
    \tau\paraa{\alpha(u)}\tau(x)\bracketa{\beta(v),[y,z]}+
    \tau\paraa{\beta(u)}\tau(x)\bracketa{\alpha(z),[v,y]}.
  \end{align*}
  By using \eqref{eq:taualphabeta} one can rewrite this term as
  \begin{align*}
    \tau\paraa{\beta(x)}\tau(u)\bracketa{\alpha(y),[z,v]}+
    \tau\paraa{\beta(u)}\tau(x)\bracketa{\alpha(v),[y,z]}+
    \tau\paraa{\beta(u)}\tau(x)\bracketa{\alpha(z),[v,y]},
  \end{align*}
  and by using \eqref{eq:taubeta} and the Hom-Jacobi identity one sees
  that this term vanishes. The remaining five triples of terms can be
  shown to vanish in an analogous way. Hence, the Hom-Nambu identity
  is satisfied.
\end{proof}

\begin{remark}
  If we choose $\beta=\alpha$ in Theorem \ref{thm:inducedHomLie},
 conditions \eqref{eq:taualpha} -- \eqref{eq:taualphabeta} reduce to the single relation
  \begin{align*}
    \tau\paraa{\alpha(x)}\tau(y) &=\tau(x)\tau\paraa{\alpha(y)}.
  \end{align*}
\end{remark}

\noindent Choosing $\alpha$ and $\beta$ to be identity maps in Theorem
\ref{thm:inducedHomLie} one obtains the result in \cite{amdy:quantnambu}.

\begin{corollary}
  Let $(V,[\cdot,\cdot])$ be a Lie algebra and $\tau:V\to\K$ be a
  trace function on $V$. Then $(V,[\cdot,\cdot,\cdot]_\tau)$ is a
  Nambu-Lie algebra.
\end{corollary}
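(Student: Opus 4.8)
The plan is to obtain this as the special case of Theorem~\ref{thm:inducedHomLie} in which both twisting maps are the identity. First I would note that a Lie algebra $(V,[\cdot,\cdot])$ is the same thing as a Hom-Lie algebra $(V,[\cdot,\cdot],\mathrm{id}_V)$: skew-symmetry is the identical requirement, while the Hom-Jacobi condition $\circlearrowleft_{x,y,z}[\mathrm{id}_V(x),[y,z]]=0$ is literally the Jacobi identity $\circlearrowleft_{x,y,z}[x,[y,z]]=0$, which holds by hypothesis. Thus $(V,[\cdot,\cdot],\mathrm{id}_V)$ is a Hom-Lie algebra, and $\tau$ is by assumption a trace function on it.

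Next I would take $\beta=\mathrm{id}_V$ and verify the three compatibility conditions of Theorem~\ref{thm:inducedHomLie}. With $\alpha=\beta=\mathrm{id}_V$, condition \eqref{eq:taualpha} reads $\tau(x)\tau(y)=\tau(x)\tau(y)$, condition \eqref{eq:taubeta} is the same, and condition \eqref{eq:taualphabeta} reads $\tau(x)y=\tau(x)y$; all three hold trivially for all $x,y\in V$. Hence Theorem~\ref{thm:inducedHomLie} applies and tells us that $(V,[\cdot,\cdot,\cdot]_\tau,(\mathrm{id}_V,\mathrm{id}_V))$ is a ternary Hom-Nambu-Lie algebra. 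Skew-symmetry of $[\cdot,\cdot,\cdot]_\tau$ is in any case already provided by Lemma~\ref{lem:skewsymbinskewsymtril}, so only the Hom-Nambu identity is really at stake, and that is exactly what the theorem supplies.

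Finally I would use the fact, recorded in the remark following the definition of ternary Hom-Nambu algebras, that when $\alpha_1,\alpha_2$ are both identity maps the ternary Hom-Nambu identity \eqref{TernaryNambuIdentity} reduces to the fundamental (Filippov) identity. Therefore a ternary Hom-Nambu-Lie algebra whose twisting pair is $(\mathrm{id}_V,\mathrm{id}_V)$ is precisely a ternary Nambu-Lie algebra, and so $(V,[\cdot,\cdot,\cdot]_\tau)$ is a Nambu-Lie algebra, as claimed.

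There is no real obstacle in this argument: the entire content is that the three compatibility conditions degenerate to tautologies when $\alpha=\beta=\mathrm{id}_V$, so that the heavy term-chasing already carried out in the proof of Theorem~\ref{thm:inducedHomLie} does all the work. The only point requiring a moment's care is to confirm that these conditions genuinely trivialize rather than hiding a constraint on $\tau$, and the one-line substitutions above settle that.
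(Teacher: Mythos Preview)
Your argument is correct and is exactly the approach the paper takes: the corollary is stated immediately after Theorem~\ref{thm:inducedHomLie} with the remark that it follows by choosing $\alpha$ and $\beta$ to be identity maps, and no further proof is given. Your explicit verification that the compatibility conditions \eqref{eq:taualpha}--\eqref{eq:taualphabeta} trivialize under $\alpha=\beta=\mathrm{id}_V$ simply spells out what the paper leaves implicit.
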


\noindent Relation \eqref{eq:taualphabeta} effectively allows for the
interchange of $\alpha$ and $\beta$ in equations involving
$\tau$. Therefore, even though $\beta$ is only assumed to be a linear
map, relation \eqref{eq:taualphabeta} induces a Hom-Jacobi identity
for $(V,[\cdot,\cdot])$ with respect to $\beta$ in many cases.

\begin{proposition}
  Let $(V,[\cdot,\cdot],\alpha)$ be a Hom-Lie algebra and let
  $\tau:V\to\K$ and $\beta:V\to V$ be linear maps satisfying
  $\tau\paraa{\alpha(x)}\beta(y) =\tau\paraa{\beta(x)}\alpha(y)$ for
  all $x,y\in V$, and such that there exists an element $v\in V$ with
  $\tau\paraa{\alpha(v)}\neq 0$. Then $(V,[\cdot,\cdot],\beta)$ is a
  Hom-Lie algebra.
\end{proposition}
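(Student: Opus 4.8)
The plan is to show that the hypotheses force $\beta$ to be a scalar multiple of $\alpha$, after which the conclusion is immediate. First I would note that skew-symmetry of the bracket is part of the given Hom-Lie structure $(V,[\cdot,\cdot],\alpha)$ and does not involve the twisting map at all; since the bracket in $(V,[\cdot,\cdot],\beta)$ is literally the same, skew-symmetry carries over for free, and the only thing left to verify is the Hom-Jacobi condition $\circlearrowleft_{x,y,z}[\beta(x),[y,z]]=0$ with respect to $\beta$.

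Next I would exploit the element $v$ with $\tau\paraa{\alpha(v)}\neq 0$. Substituting $x=v$ into the relation $\tau\paraa{\alpha(x)}\beta(y)=\tau\paraa{\beta(x)}\alpha(y)$ gives $\tau\paraa{\alpha(v)}\beta(y)=\tau\paraa{\beta(v)}\alpha(y)$ for all $y\in V$, and dividing by the nonzero scalar $\tau\paraa{\alpha(v)}$ yields
\[
\beta(y)=\lambda\,\alpha(y),\qquad \lambda:=\frac{\tau\paraa{\beta(v)}}{\tau\paraa{\alpha(v)}}\in\K,
\]
so that $\beta=\lambda\alpha$ as linear maps on $V$.

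Finally I would substitute this into the Hom-Jacobi expression: for all $x,y,z\in V$, bilinearity of the bracket gives
\[
\circlearrowleft_{x,y,z}[\beta(x),[y,z]]
=\circlearrowleft_{x,y,z}[\lambda\alpha(x),[y,z]]
=\lambda\,\circlearrowleft_{x,y,z}[\alpha(x),[y,z]]=0,
\]
the last equality being the Hom-Jacobi identity for the Hom-Lie algebra $(V,[\cdot,\cdot],\alpha)$. Hence $(V,[\cdot,\cdot],\beta)$ is a Hom-Lie algebra.

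I do not expect a genuine obstacle here: the whole content of the statement lies in the observation that relation \eqref{eq:taualphabeta}, read off at a single ``good'' element $v$, collapses $\beta$ into a scalar multiple of $\alpha$; everything afterward is just linearity. The one point worth stating carefully is that $\lambda$ is an honest element of $\K$, independent of $y$ — which is precisely why one fixes one $v$ with $\tau\paraa{\alpha(v)}\neq 0$ rather than letting $x$ vary.
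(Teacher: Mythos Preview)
Your proof is correct and rests on the same key observation as the paper's: specializing the relation $\tau\paraa{\alpha(x)}\beta(y)=\tau\paraa{\beta(x)}\alpha(y)$ at the distinguished element $x=v$. The only cosmetic difference is that you divide by $\tau\paraa{\alpha(v)}$ immediately to obtain $\beta=\lambda\alpha$ and then substitute, whereas the paper multiplies the Hom-Jacobi sum for $\alpha$ by $\tau\paraa{\beta(v)}$, uses the relation termwise to swap $\tau\paraa{\beta(v)}\alpha(\cdot)$ for $\tau\paraa{\alpha(v)}\beta(\cdot)$, and divides by $\tau\paraa{\alpha(v)}$ at the end---the same computation in a different order. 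Your phrasing has the advantage of making the structural fact $\beta=\lambda\alpha$ explicit, which the paper establishes separately (and more generally) in Proposition~\ref{prop:casesab}.
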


\begin{proof}
  Multiplying the Hom-Jacobi identity with $\tau\paraa{\beta(v)}$
  gives
  \begin{align*}
    \tau\paraa{\beta(v)}\parab{\bracketa{\alpha(x),[y,z]}+\bracketa{\alpha(y),[z,x]}+\bracketa{\alpha(z),[x,y]}}=0,
  \end{align*}
  and by using the relation between $\alpha$, $\beta$ and $\tau$ in all three terms one obtains
  \begin{align*}
    \tau\paraa{\alpha(v)}\parab{\bracketa{\beta(x),[y,z]}+\bracketa{\beta(y),[z,x]}+\bracketa{\beta(z),[x,y]}}=0.
  \end{align*}
  By assumption $\tau\paraa{\alpha(v)}\neq 0$, which reduces the above
  identity to the Hom-Jacobi identity for $\beta$. Since $x,y,z$ were
  chosen to be arbitrary, this proves that $(V,[\cdot,\cdot],\beta)$
  is a Hom-Lie algebra.
\end{proof}

\section{Properties of the compatibility conditions}\label{sec:propertiestriple}

\noindent When inducing a Hom-Nambu-Lie algebra via Theorem
\ref{thm:inducedHomLie}, one might ask how restrictive the
$\alpha,\beta,\tau$-compatibility conditions \eqref{eq:taualpha} --
\eqref{eq:taualphabeta} are? For instance, given a Hom-Lie algebra
$(V,[\cdot,\cdot],\alpha)$, how much freedom does one have to choose
$\beta$? It turns out that generically $\beta$ has to be
proportional to $\alpha$, except in the case when the images of
$\alpha$ and $\beta$ lie in the kernel of $\tau$ (see Proposition
\ref{prop:casesab}).

In the following we shall study consequences of the
$\alpha,\beta,\tau$-compatibility conditions by studying the kernels
of $\alpha$, $\beta$ and $\tau$.

\begin{definition}
  Let $V$ be a vector space, $\alpha$ and $\beta$ linear maps $V\to V$
  and $\tau$ a trace function on $V$. We say that the triple
  $(\alpha,\beta,\tau)$ is \emph{compatible on $V$} if conditions
  \eqref{eq:taualpha} -- \eqref{eq:taualphabeta} hold. Moreover, if
  $\ker\tau\neq \{0\}$ and $\ker\tau\neq V$ then we call the triple
  \emph{nondegenerate}.
\end{definition}

\begin{proposition}\label{prop:abelianKernel}
  Let $\A=(V,\triplebcdot_\tau,(\alpha,\beta))$ be a Hom-Nambu-Lie
  algebra induced by $(V,\doublebcdot,\alpha)$. If
  $\ker\tau=\{0\}$ or $\ker\tau=V$ then $\A$ is abelian.
\end{proposition}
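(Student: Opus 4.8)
The plan is to observe that the hypothesis $\ker\tau=\{0\}$ or $\ker\tau=V$ is strong enough to force the induced ternary bracket $\triplebcdot_\tau$ to vanish identically on $V$, which is exactly what it means for $\A$ to be abelian. In particular the compatibility conditions \eqref{eq:taualpha}--\eqref{eq:taualphabeta} will play no role; only the structure of $\tau$ enters. So I would split into the two cases.

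First I would dispose of the case $\ker\tau=V$. Then $\tau$ is the zero functional, and substituting $\tau(x)=\tau(y)=\tau(z)=0$ directly into the defining formula \eqref{NambuCommutator1} gives $[x,y,z]_\tau=0$ for all $x,y,z\in V$, so $\A$ is abelian.

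Next I would treat $\ker\tau=\{0\}$, i.e.\ $\tau\colon V\to\K$ injective. Since $\K$ is one-dimensional as a vector space over itself, injectivity of $\tau$ forces $\dim_\K V\le 1$. On a space of dimension at most one every skew-symmetric bilinear map vanishes: skew-symmetry together with $\operatorname{char}\K=0$ gives $[x,x]=0$, and any two vectors are proportional, so writing $y=\lambda x$ one gets $[x,y]=\lambda[x,x]=0$. Hence $[\cdot,\cdot]\equiv 0$ on $V$, and \eqref{NambuCommutator1} again yields $[x,y,z]_\tau=0$ for all $x,y,z$. In both cases the ternary bracket is identically zero, so $\A$ is abelian.

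The only step that is not completely immediate from the definition of the induced bracket is the passage, in the second case, from injectivity of the linear functional $\tau$ to $\dim V\le 1$ and thence to the vanishing of the skew-symmetric binary multiplication; this is the (mild) crux of the argument, and everything else is a direct substitution into \eqref{NambuCommutator1}.
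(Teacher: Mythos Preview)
Your proof is correct, and the overall case split and the $\ker\tau=V$ case are handled identically to the paper. The one difference is in the $\ker\tau=\{0\}$ case: the paper does not pass through a dimension count, but instead uses directly that $\tau$ is a \emph{trace function}, i.e.\ $\tau([x,y])=0$ for all $x,y$; injectivity of $\tau$ then immediately gives $[x,y]=0$ and hence $[x,y,z]_\tau=0$. Your dimension argument is valid and has the small virtue of not needing the trace property at all (only that $\tau$ is a linear functional), while the paper's argument is a one-line application of the hypothesis already built into the setup of an induced algebra.
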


\begin{proof}
  First, assume that $\ker\tau=\{0\}$. Since $\tau([x,y])=0$ for
  all $x,y\in V$ it follows that $[x,y]=0$ for all $x,y\in V$. By the
  definition of $\triplebcdot_\tau$ this implies that $[x,y,z]=0$ for all $x,y,z\in V$.

  Now, assume that $\ker\tau=V$. This directly implies (see
  \eqref{NambuCommutator1}) that $[x,y,z]=0$ for all $x,y,z\in V$.
\end{proof}

\noindent By an ideal of a Hom-Lie algebra $(V,[\cdot,\cdot],\alpha)$
we mean a subset $I\subseteq V$ such that $[I,V]\subseteq I$, and we
say that a Hom-Lie algebra is \emph{simple} if it has no ideals other
than $\{0\}$ and $V$. Since the kernel of a trace function is
always an ideal, one obtains the following corollary to Proposition
\ref{prop:abelianKernel}.

\begin{corollary}
  A Hom-Nambu-Lie algebra induced by a simple Hom-Lie
  algebra is abelian.
\end{corollary}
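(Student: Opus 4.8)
The plan is to combine Proposition \ref{prop:abelianKernel} with two standard facts: that the kernel of a trace function is an ideal of the underlying Hom-Lie algebra, and that a simple Hom-Lie algebra has only the trivial ideals $\{0\}$ and $V$.

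First I would verify that $\ker\tau$ is indeed an ideal of $(V,[\cdot,\cdot],\alpha)$. This is immediate from the defining property of a trace function: if $x\in\ker\tau$ and $y\in V$, then $\tau([x,y])=0$ since $\tau([u,v])=0$ for all $u,v\in V$, so $[x,y]\in\ker\tau$; by skew-symmetry also $[y,x]\in\ker\tau$, hence $[\ker\tau,V]\subseteq\ker\tau$. (In fact every element of $V$ acts this way, so $\ker\tau$ is even stronger than a generic ideal, but being an ideal is all we need.)

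Next I would invoke simplicity. Since $(V,[\cdot,\cdot],\alpha)$ is simple and $\ker\tau$ is an ideal, we must have $\ker\tau=\{0\}$ or $\ker\tau=V$. In either case Proposition \ref{prop:abelianKernel} applies directly to the induced Hom-Nambu-Lie algebra $\A=(V,[\cdot,\cdot,\cdot]_\tau,(\alpha,\beta))$ and yields that $\A$ is abelian, i.e. $[x,y,z]_\tau=0$ for all $x,y,z\in V$. This completes the argument.

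There is essentially no obstacle here; the statement is a formal corollary and the only thing requiring a line of justification is the observation that $\ker\tau$ is an ideal, which the paragraph preceding the corollary already asserts. One could alternatively phrase the whole proof in one sentence: by the remark that $\ker\tau$ is an ideal, simplicity forces $\ker\tau\in\{\{0\},V\}$, and then Proposition \ref{prop:abelianKernel} gives the conclusion.
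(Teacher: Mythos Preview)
Your argument is correct and follows essentially the same approach as the paper: the paper also uses that $\ker\tau$ is an ideal (noted just before the corollary) together with Proposition~\ref{prop:abelianKernel}, merely phrasing the implication contrapositively instead of directly.
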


\begin{proof}
  Assume that the induced Hom-Nambu-Lie algebra is not abelian. Then,
  by Proposition \ref{prop:abelianKernel}, the kernel of $\tau$ is
  neither $\{0\}$ nor $V$. This implies that $\ker\tau$ is a
  non-trivial ideal of the Hom-Lie algebra, which contradicts that it
  is assumed to be simple.
\end{proof}

\noindent In Proposition \ref{prop:abelianKernel} we noted that if the
kernel of $\tau$ is either the complete vector space or $\{0\}$,
then the induced Hom-Nambu-Lie algebra will be abelian, and therefore
we shall focus on nondegenerate triples in the following. To fix
notation, we introduce $K=\ker\tau$ and $U$ such that $U=V\backslash
K$. For a nondegenerate compatible triple, $U$ and $K$ are always
different from $\{0\}$.

\begin{lemma}
  Let $(\alpha,\beta,\tau)$ be a nondegenerate compatible triple on
  $V$. Then it holds that $\alpha(K)\subseteq K$ and $\beta(K)\subseteq K$.
\end{lemma}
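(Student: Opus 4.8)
The goal is to show that $\alpha$ and $\beta$ map $K=\ker\tau$ into itself. The plan is to use the compatibility conditions \eqref{eq:taualpha} and \eqref{eq:taubeta} together with nondegeneracy, which guarantees the existence of a witness $v\in V$ with $\tau(v)\neq 0$. First I would take an arbitrary $x\in K$, so $\tau(x)=0$. Plugging $x$ and $y=v$ into \eqref{eq:taualpha} gives $\tau(\alpha(x))\tau(v)=\tau(x)\tau(\alpha(v))=0$. Since $\tau(v)\neq 0$ and $\K$ is a field, this forces $\tau(\alpha(x))=0$, i.e. $\alpha(x)\in K$. This establishes $\alpha(K)\subseteq K$.

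For $\beta$ the argument is identical, using \eqref{eq:taubeta} in place of \eqref{eq:taualpha}: for $x\in K$ and the same witness $v$, relation \eqref{eq:taubeta} yields $\tau(\beta(x))\tau(v)=\tau(x)\tau(\beta(v))=0$, and dividing by $\tau(v)\neq 0$ gives $\tau(\beta(x))=0$, so $\beta(x)\in K$. Hence $\beta(K)\subseteq K$ as well.

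There is essentially no obstacle here; the only subtlety is making sure nondegeneracy is used correctly. Nondegeneracy says $\ker\tau\neq V$, which is exactly what provides an element $v$ with $\tau(v)\neq 0$ — without this the conclusion can fail trivially (e.g. if $\tau\equiv 0$ then every linear map is allowed and the statement would be vacuous anyway, but the "nondegenerate" hypothesis rules this out and makes the division step legitimate). The condition $\ker\tau\neq\{0\}$ is not needed for this particular lemma, but it is part of the standing definition of nondegenerate triple. Note also that the third compatibility condition \eqref{eq:taualphabeta} plays no role in this lemma.
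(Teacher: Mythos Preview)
Your proof is correct and follows exactly the same approach as the paper: use nondegeneracy to pick a witness $y$ with $\tau(y)\neq 0$, then apply \eqref{eq:taualpha} and \eqref{eq:taubeta} to an arbitrary $x\in K$ to conclude $\tau(\alpha(x))=\tau(\beta(x))=0$. Your additional remarks about which hypotheses are actually used (only $\ker\tau\neq V$, and not \eqref{eq:taualphabeta}) are accurate and go slightly beyond what the paper states explicitly.
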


\begin{proof}
  Since the triple is assumed to be nondegenerate, one can find an
  element $y\in V$ such that $\tau(y)\neq 0$. For any $x\in K$,
  equations \eqref{eq:taualpha} and \eqref{eq:taubeta} imply that
  $\tau\paraa{\alpha(x)}=0$ and $\tau\paraa{\beta(x)}=0$. Hence,
  $\alpha$ and $\beta$ map $K$ into $K$.
\end{proof}

\begin{lemma}
  Let $(\alpha,\beta,\tau)$ be a nondegenerate compatible triple on
  $V$ and assume that there exists an element $u\in U$ such that
  $\alpha(u)\in K$ (or $\beta(u)\in K$). Then $\alpha(U)\subseteq K$
  (or $\beta(U)\subseteq K$).
\end{lemma}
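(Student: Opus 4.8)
The plan is to exploit the multiplicativity built into conditions \eqref{eq:taualpha} and \eqref{eq:taubeta}, which say precisely that the functionals $x\mapsto\tau(\alpha(x))$ and $x\mapsto\tau(\beta(x))$ are each proportional to $\tau$ itself. First I would treat the case $\alpha(u)\in K$ for some $u\in U$, meaning $\tau(\alpha(u))=0$ while $\tau(u)\neq 0$. Applying \eqref{eq:taualpha} with $x=u$ and an arbitrary $y\in V$ gives $\tau(\alpha(u))\tau(y)=\tau(u)\tau(\alpha(y))$; since the left-hand side is zero and $\tau(u)\neq 0$, we conclude $\tau(\alpha(y))=0$ for every $y\in V$, i.e. $\alpha(V)\subseteq K$, which in particular yields $\alpha(U)\subseteq K$. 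The case $\beta(u)\in K$ is handled identically using \eqref{eq:taubeta} in place of \eqref{eq:taualpha}.

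There is essentially no obstacle here; the only thing to be careful about is not to overclaim. Note that the argument actually proves the stronger statement $\alpha(V)\subseteq K$ (respectively $\beta(V)\subseteq K$), so in writing the proof I would derive that and then remark that $\alpha(U)\subseteq K$ follows a fortiori, which matches the phrasing of the lemma. One should also confirm that nondegeneracy is genuinely used: it guarantees $U\neq\{0\}$, so that the hypothesis "there exists $u\in U$ with $\alpha(u)\in K$" is not vacuous in a way that would make the conclusion trivial, and it is what lets us pick $u$ with $\tau(u)\neq 0$ in the first place. The previous lemma ($\alpha(K)\subseteq K$, $\beta(K)\subseteq K$) is not logically needed for this argument, though it combines with the present one to give a clean dichotomy: either $\alpha$ preserves the decomposition $V=U\cup K$ on the $U$-side, or it collapses all of $V$ into $K$.

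Concretely, the write-up is just a couple of lines: assume $\alpha(u)\in K$ with $u\in U$; for any $y\in V$, \eqref{eq:taualpha} reads $0=\tau(\alpha(u))\tau(y)=\tau(u)\tau(\alpha(y))$; divide by $\tau(u)\neq 0$ to get $\tau(\alpha(y))=0$; hence $\alpha(V)\subseteq\ker\tau=K$ and so $\alpha(U)\subseteq K$. Repeat verbatim with $\beta$ and \eqref{eq:taubeta} for the parenthetical case. The "hard part" is nil — this is a warm-up lemma whose role is to feed into the later Proposition \ref{prop:casesab} classifying the possible mutual positions of the kernels of $\alpha$, $\beta$ and $\tau$.
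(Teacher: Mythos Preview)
Your argument is correct and is essentially identical to the paper's own proof: apply \eqref{eq:taualpha} to the pair $(u,x)$ with $u\in U$, $\alpha(u)\in K$ to force $\tau(\alpha(x))=0$ for all $x$, and handle $\beta$ the same way via \eqref{eq:taubeta}. Your additional remarks about nondegeneracy and the stronger conclusion $\alpha(V)\subseteq K$ are accurate but not present in the paper's terse version.
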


\begin{proof}
  For a general $x\in V$ it follows from (\ref{eq:taualpha}) (on $u$
  and $x$) that $\tau(\alpha(x))=0$, since $u\in U$ and $\alpha(u)\in
  K$. An identical argument goes through for $\beta$ by using
  (\ref{eq:taubeta}).
\end{proof}

\noindent These results allow us to split the problem into four possible cases:
\begin{align}
  &\alpha(U)\subseteq U\text{ and }\beta(U)\subseteq U\tag{C1}\label{eq:C1}\\
  &\alpha(U)\subseteq K\text{ and }\beta(U)\subseteq K\tag{C2}\label{eq:C2}\\
  &\alpha(U)\subseteq U\text{ and }\beta(U)\subseteq K\tag{C3}\label{eq:C3}\\
  &\alpha(U)\subseteq K\text{ and }\beta(U)\subseteq U\tag{C4}\label{eq:C4}
\end{align}

\noindent Clearly, in case (\ref{eq:C2}) the compatibility conditions
will be identically satisfied since
$\tau\paraa{\alpha(x)}=\tau\paraa{\beta(x)}=0$ for all $x\in V$. In
the other cases, the next proposition shows that one does not have any
freedom to choose $\alpha$ and $\beta$ independently.

\begin{proposition}\label{prop:casesab}
  Let $(\alpha,\beta,\tau)$ be a nondegenerate compatible triple on
  $V$. Then, refering to the cases in (\ref{eq:C1}) -- (\ref{eq:C4}),
  the following holds:
  \begin{align}
    \exists\,\lambda\in&\K\backslash\{0\}:\,\, \beta=\lambda\alpha\tag{C1}\\
    \tag{C3}&\beta\equiv 0\\
    \tag{C4}&\alpha\equiv 0
  \end{align}
\end{proposition}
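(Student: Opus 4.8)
The plan is to work directly with the compatibility relation \eqref{eq:taualphabeta}, namely
\begin{align*}
  \tau\paraa{\alpha(x)}\beta(y) =\tau\paraa{\beta(x)}\alpha(y),
\end{align*}
and exploit the case hypotheses together with nondegeneracy. Since the triple is nondegenerate, $U=V\backslash K$ is nonempty, so I can fix once and for all some $u\in U$, i.e.\ with $\tau(u)\neq 0$. The key preliminary observation is that relations \eqref{eq:taualpha} and \eqref{eq:taubeta} say precisely that the linear functionals $y\mapsto\tau\paraa{\alpha(y)}$ and $y\mapsto\tau\paraa{\beta(y)}$ are scalar multiples of $\tau$ itself: indeed, setting $x=u$ in \eqref{eq:taualpha} gives $\tau\paraa{\alpha(y)}=c_\alpha\tau(y)$ with $c_\alpha=\tau\paraa{\alpha(u)}/\tau(u)$, and likewise $\tau\paraa{\beta(y)}=c_\beta\tau(y)$ with $c_\beta=\tau\paraa{\beta(u)}/\tau(u)$. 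Note that $c_\alpha\neq 0$ exactly when $\alpha(U)\subseteq U$ (equivalently $\alpha(u)\notin K$), and similarly for $c_\beta$ and $\beta$; this is how the four cases translate into statements about whether $c_\alpha$ and $c_\beta$ vanish.

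Next I substitute these scalar relations back into \eqref{eq:taualphabeta}, which becomes
\begin{align*}
  c_\alpha\,\tau(x)\,\beta(y)=c_\beta\,\tau(x)\,\alpha(y)\qquad\text{for all }x,y\in V.
\end{align*}
Choosing $x=u$ so that $\tau(u)\neq 0$ and dividing, I get $c_\alpha\,\beta(y)=c_\beta\,\alpha(y)$ for all $y\in V$, i.e.\ $c_\alpha\beta=c_\beta\alpha$ as linear maps. Now I simply read off the three cases. In case \eqref{eq:C1}, both $c_\alpha\neq 0$ and $c_\beta\neq 0$, so $\beta=\lambda\alpha$ with $\lambda=c_\beta/c_\alpha\in\K\backslash\{0\}$, giving (C1). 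In case \eqref{eq:C3}, $\alpha(U)\subseteq U$ so $c_\alpha\neq 0$, while $\beta(U)\subseteq K$ so $c_\beta=0$; then $c_\alpha\beta=0$ forces $\beta\equiv 0$, giving (C3). In case \eqref{eq:C4}, symmetrically $c_\beta\neq 0$ and $c_\alpha=0$, so $c_\beta\alpha=0$ forces $\alpha\equiv 0$, giving (C4).

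The only subtlety worth checking carefully — and I expect this to be the one place where the argument could slip — is the equivalence "$c_\alpha\neq 0\iff\alpha(U)\subseteq U$". One direction is immediate from $\tau\paraa{\alpha(y)}=c_\alpha\tau(y)$: if $c_\alpha=0$ then $\alpha(y)\in K$ for all $y$, in particular $\alpha(U)\subseteq K$; conversely if $\alpha(U)\subseteq K$ then $\tau\paraa{\alpha(u)}=0$ so $c_\alpha=0$. For the case hypotheses as stated, the earlier lemma guarantees that $\alpha(U)$ is entirely inside $U$ or entirely inside $K$ (no mixed behaviour), so the dichotomy $c_\alpha=0$ vs.\ $c_\alpha\neq0$ precisely matches \eqref{eq:C1}--\eqref{eq:C4}; I should state this reliance on the preceding lemma explicitly. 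Everything else is routine linear algebra, so the proof is short once the reduction of \eqref{eq:taualpha}, \eqref{eq:taubeta} to scalar multiples of $\tau$ is in hand.
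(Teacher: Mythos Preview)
Your argument is correct and follows essentially the same route as the paper: fix $u\in U$ and use \eqref{eq:taualphabeta} to obtain a scalar relation between $\alpha$ and $\beta$, then read off the three cases according to whether $\tau(\alpha(u))$ and $\tau(\beta(u))$ vanish. The paper is slightly more direct in that it applies \eqref{eq:taualphabeta} with $x=u$ immediately, without first passing through the auxiliary constants $c_\alpha,c_\beta$ extracted from \eqref{eq:taualpha} and \eqref{eq:taubeta}; your detour through those relations is harmless but not needed.
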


\begin{proof}
  Case (\ref{eq:C1}): If one chooses $u\in U$ and $x\in
  V$, relation (\ref{eq:taualphabeta}) gives
  \begin{align*}
    \beta(x)=\frac{\tau\paraa{\beta(u)}}{\tau\paraa{\alpha(u)}}\alpha(x),
  \end{align*}
  where $\tau\paraa{\beta(u)}\neq 0$ by assumption. Case
  (\ref{eq:C3}): By choosing $u\in U$ and $x\in V$, relation
  (\ref{eq:taualphabeta}) gives $\beta(x)=0$. Case (\ref{eq:C4}) is
  proven in the same way.
\end{proof}


\section{Examples}\label{sec:examples}
In this section we provide several examples of ternary Hom-Nambu-Lie algebras induced by Hom-Lie algebras by means of the method described in Theorem \ref{thm:inducedHomLie}.
From the results in the previous section (see Proposition \ref{prop:casesab}), there are two
(non-trivial) possibilities for $\alpha$ and $\beta$. Either
$\beta=\lambda\alpha$ or the images of $\alpha$ and $\beta$ are in the
kernel of $\tau$, in which case it is possible to have
$\beta\neq\lambda\alpha$. We provide examples in both cases.


\begin{example}
  In our first example, we let $V$ be the vector space of
  $n\times n$ matrices, and $\alpha:V\to V$ acts through conjugation by
  an invertible matrix $s$, i.e. $\alpha(x)=s^{-1}xs$. Then
  $(V,\alpha\circ\doublebcdot,\alpha)$ is a Hom-Lie algebra. For
  matrices, any trace function is proportional to the matrix trace, so
  we let $\tau(x)=\tr(x)$. If we want to choose a $\beta\neq 0$, it
  follows from Proposition \ref{prop:casesab} that $\beta$ has to be
  proportional to $\alpha$, i.e. $\beta=\lambda\alpha$ for some
  $\lambda\neq 0$. Since $\tr\paraa{\alpha(x)}=\tr(x)$ it is clear that
  $(\alpha,\lambda\alpha,\tr)$ is a nondegenerate compatible triple on
  $V$, which implies, by Theorem \ref{thm:inducedHomLie}, that
  $\paraa{V,\triplebcdot_{\tr},(\alpha,\lambda\alpha)}$ is a
  Hom-Nambu-Lie algebra induced by
  $(V,\alpha\circ\doublebcdot,\alpha)$.
\end{example}


\begin{example}
  Let us start with the vector space $V$ spanned by
  $\{x_1,x_2,x_3,x_4\}$ with a skew-symmetric bilinear map defined
  through
  \begin{align*}
    [x_i,x_j] = a_{ij}x_3+b_{ij}x_4
  \end{align*}
  where $a_{ij}$ and $b_{ij}$ are antisymmetric $4\times 4$ matrices. Defining
  \begin{align*}
    &\alpha(x_i) = x_3\qquad\beta(x_i)=x_4\qquad i=1,\ldots,4\\
    &\tau(x_1)=\gamma_1\qquad\tau(x_2)=\gamma_2\qquad\tau(x_3)=\tau(x_4)=0,
  \end{align*}
  one immediately observes that $\tau$ is a trace function,
  $\im\alpha\subseteq\ker\tau$, $\im\beta\subseteq\ker\tau$,
  and $\beta\neq\lambda\alpha$. Furthermore, $(V,[\cdot,\cdot],\alpha)$ is a
  Hom-Lie algebra provided
  \begin{align*}
    &b_{13}=b_{12}+b_{23}\\
    &b_{14}=b_{12}+b_{23}+b_{34}\\
    &b_{24}=b_{23}+b_{34}.
  \end{align*}
  The four independent ternary brackets of the induced Hom-Nambu-Lie algebra can be written as
  \begin{align*}
    &[x_1,x_2,x_3] = \paraa{\gamma_1a_{23}-\gamma_2a_{13}}x_3 + \paraa{\gamma_1 b_{23}-\gamma_2(b_{12}+b_{23})}x_4\\
    &[x_1,x_2,x_4] = \paraa{\gamma_1a_{24}-\gamma_2a_{14}}x_3 + \paraa{\gamma_1(b_{23}+b_{34})-\gamma_2(b_{12}+b_{23}+b_{34})}x_4\\
    &[x_1,x_3,x_4] = (\gamma_1a_{34})x_3 + (\gamma_1b_{34})x_4\\
    &[x_2,x_3,x_4] = (\gamma_2a_{34})x_3 + (\gamma_2b_{34})x_4.
  \end{align*}
  For instance, choosing $\gamma_1=\gamma_2=1$ and $a_{i<j}=1$,
  one obtains the Hom-Nambu-Lie algebra
  $\paraa{\langle x_1,x_2,x_3,x_4\rangle,[\cdot,\cdot,\cdot],(\alpha,\beta)}$ defined by \begin{align*}
    &[x_1,x_2,x_3] = -b_{12} x_4\\
    &[x_1,x_2,x_4] = -b_{34} x_4\\
    &[x_1,x_3,x_4] = x_3+b_{34} x_4\\
    &[x_2,x_3,x_4] = x_3+b_{34} x_4
  \end{align*}
  together with $\alpha(x_i)=x_3$ and $\beta(x_i)=x_4$.
\end{example}

\begin{example}
  We consider the 3-dimensional Hom-Lie algebra defined with respect to a basis $\{x_1,x_2,x_3\}$ by
  \begin{eqnarray*}
    \ [x_1,x_2]&=&a_1 x_2-\frac{a_2 a_4}{a_3} x_3, \\
    \ [x_1,x_3]&=&-\frac{a_1 a_3}{a_4} x_2+a_2 x_3,\\
    \ [x_2,x_3]&=&a_3 x_2+ a_4 x_3,
  \end{eqnarray*}
  where $a_1,a_2,a_3,a_4$ are parameters in $\K$ and $a_3,a_4\neq 0$.  The map
  $\alpha$ is defined by
  \begin{align*}
    &\alpha(x_1) = px_1\\
    &\alpha(x_2) = qx_3\\
    &\alpha(x_3) = qx_4,
  \end{align*}
  for any $p,q\in \K$.  We define a trace function as
  $$\tau (x_1)=t,\ \tau (x_2)=0,\ \tau (x_3)=0, $$ for any $t\in \K$.

  If $p\neq 0$, we let $\beta$ be the linear map defined by
  \begin{align*}
    &\beta(x_1) = rx_1\\
    &\beta(x_2) = \frac{qr}{p}x_2\\
    &\beta(x_3) = sx_3,
  \end{align*}
  for any $r,s\in \K$.  The previous data satisfies the
  conditions \eqref{eq:taualpha}, \eqref{eq:taubeta} and
  \eqref{eq:taualphabeta}. Then, according to Theorem
  \ref{thm:inducedHomLie}, we obtain a ternary Hom-Nambu-Lie algebra
  defined by
  $$ [x_1,x_2,x_3]= t(a_3x_2+a_4 x_3).$$

If $p=0$  then
  one may consider a map $\beta$ of the form
  \begin{align*}
    &\beta(x_1) = 0\\
    &\beta(x_2) = r_1x_1+r_2x_2+r_3x_3\\
    &\beta(x_3) = r_4x_1+r_5x_2+r_6x_3.
  \end{align*}
  for any $r_1 , r_2 , r_3, r_4 , r_5 , r_6\in \K$.
  The ternary bracket is the same as for $p\neq 0$ case.
\end{example}

\begin{example}
  We consider the 3-dimensional Hom-Lie algebra defined with respect to a basis $\{x_1,x_2,x_3\}$ by
  \begin{eqnarray*}
    \ [x_1,x_2]&=&-a_1 x_2+a_2  x_3, \\
    \ [x_1,x_3]&=& a_3 x_2+a_1 x_3,\\
    \ [x_2,x_3]&=&a_4 x_2+ a_5 x_3,
  \end{eqnarray*}
  where $a_1,a_2,a_3,a_4,a_5$ are parameters in $\K$.
  The map $\alpha$ is defined by
  \begin{align*}
    &\alpha(x_1) = 0\\
    &\alpha(x_2) = qx_3\\
    &\alpha(x_3) = qx_4,
  \end{align*}
  for any $q\in \K$, and we define a trace function as
  $$\tau (x_1)=t,\ \tau (x_2)=0,\ \tau (x_3)=0, $$ for any $t\in \K$.

  Let $\beta$ be the linear map defined by
  \begin{align*}
    &\beta(x_1) = 0\\
    &\beta(x_2) = r_1x_1+r_2x_2+r_3x_3\\
    &\beta(x_3) = r_4x_1+r_5x_2+r_6x_3,
  \end{align*}
  for any $r_1 , r_2 , r_3,r_4 r_5,r_6\in \K$.
  The previous data satisfies the condition \eqref{eq:taualpha},
  \eqref{eq:taubeta} and \eqref{eq:taualphabeta}. Then, according to
  Theorem \ref{thm:inducedHomLie}, we obtain a ternary Hom-Nambu-Lie
  algebra defined by
  $$ [x_1,x_2,x_3]= t(a_4x_2+a_5 x_3).$$
\end{example}

\end{document}